\definecolor{alert}{rgb}{0.8,0,0}
\newcommand{\s}{\mathbb{S}}
\renewcommand{\r}{\mathbb{R}}
\DeclareMathOperator{\traza}{tr}
\newcommand{\abs}[1]{\left\lvert{#1}\right\rvert}
\newtheorem{theorem}{Theorem}
\newtheorem{corollary}{Corollary}
\theoremstyle{definition}
\theoremstyle{remark}
\numberwithin{equation}{section}
\begin{document}

\title[On stable compact minimal submanifolds]{On stable compact minimal submanifolds}

\author{Francisco Torralbo}
\address{Departamento de Geometr\'{\i}a  y Topolog\'{\i}a \\
Universidad de Granada \\
18071 Granada, SPAIN}
\email{ftorralbo@ugr.es}

\author{Francisco Urbano}
\address{Departamento de Geometr\'{\i}a  y Topolog\'{\i}a \\
Universidad de Granada \\
18071 Granada, SPAIN}
\email{furbano@ugr.es}

\thanks{Research partially supported by a MCyT-Feder research project MTM2007-61775 and the Junta Andalucía Grants P06-FQM-01642 and P09-FQM-4496.}

\subjclass[2010]{Primary 53C40, 53C42}

\keywords{stability, minimal submanifolds, product spaces, spheres}

\date{}
\begin{abstract}
Stable compact minimal submanifolds of the product of a sphere and any Riemannian manifold are classified whenever the dimension of the sphere is at least three. The complete classification of the stable compact minimal submanifolds of the product of two spheres is obtained. Also, it is proved that the only stable compact minimal surfaces of the product of a $2$-sphere and any Riemann surface are the complex ones.
\end{abstract}

\maketitle

\section{Introduction}

The study of the second variation of the volume of minimal submanifolds into Riemannian manifolds can be considered as a classical problem in differential geometry. In fact, the operator of the second variation (the Jacobi operator) carries the information about the stability properties of the submanifold when it is thought as a stationary point for the volume functional. The starting  point could be the paper of Simons, \cite{Si}, where he classified the compact minimal submanifolds of the sphere with the lowest index, proving that there are no stable ones. Later, Lawson and Simons in \cite{LS} characterized the complex submanifolds of the complex projective space as the only stable ones. Ohnita in \cite{Oh} exploited these ideas, classifying  the compact stable minimal submanifolds of the other  compact rank-one symmetric spaces, i.e., the real and quaternionic projective spaces and the Cayley projective plane.

Since then, many works have been devoted to study stability and index of minimal submanifolds in different ambient Riemannian manifolds. In most of these cases one considers two-sided, codimension one  minimal submanifolds, i.e., hypersurfaces with trivial normal bundle, because in this setting the Jacobi operator becomes an operator acting on functions (see \cite{FC}, \cite{FCS}, \cite{DRR} and references therein). For one-sided minimal hypersurfaces and for minimal submanifolds with codimension greater than one, only a few particular situations have been considered (see \cite{MW}, \cite{MU}, \cite{O}, \cite{Oh}, \cite{R} and references therein).

In this paper we come back to the study of stable compact minimal submanifolds with arbitrary codimension, when the ambient manifold is the Riemannian product of a sphere $\s^m(r)$ of radius $r$ and any Riemannian manifold $M$. In this setting, the product of stable minimal submanifolds is a stable minimal submanifold of $\s^m(r)\times M$ (see section $2$ for details). The main contribution in this paper is to prove that the product of stable compact minimal submanifolds of $\s^m(r)\times M$ are the only ones.

\begin{theorem}\label{tm:clasificacion-estables}
Let $M$ be any Riemannian manifold and  $\Phi=(\phi,\psi): \Sigma \rightarrow \s^{m}(r) \times M$ a minimal immersion of a compact $n$-manifold  $\Sigma$, $n\geq 2$, satisfying either $m\geq 3$ or $m=2$ and $\Phi$ is a hypersurface. Then, $\Phi$ is stable if and only if
\begin{enumerate}
	\item $\Sigma=\s^{m}(r)$ and $\Phi(\Sigma)$ is a slice $\s^{m}(r)\times\{q\}$ with $q$ a point of $M$.
\item $\Sigma$ is a covering of $M$ and $\Phi(\Sigma)$ is a slice $\{p\}\times M$ with $p$ a point of $\s^m(r)$.
\item $\psi:\Sigma\rightarrow M$ is a stable minimal submanifold and $\Phi(\Sigma)$ is $\{p\}\times \psi(M)$ with $p$ a point of $\s^m(r)$.
	\item $\Sigma=\s^{m}(r)\times \hat{\Sigma}$, $\Phi=Id\times\psi$, and $\psi:\hat{\Sigma}\rightarrow M$ is a stable minimal submanifold.
\end{enumerate}
\end{theorem}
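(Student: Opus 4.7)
\emph{Proof strategy.}

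The ``if'' direction is standard: for a slice or for a Riemannian product of minimal submanifolds the second variation decomposes as a sum of the factor second variations, which are non-negative by hypothesis. I focus on the converse. The plan is a Simons--Lawson--Simons-style test-field argument based on the conformal vector fields of $\s^{m}(r)\subset\r^{m+1}$. For each $v\in\r^{m+1}$ let $X_{v}(x)=v-\tfrac{1}{r^{2}}\langle v,x\rangle x$ be the tangential projection of $v$ onto $T_{x}\s^{m}(r)$, and set $\bar X_{v}=(X_{v},0)$ on $\s^{m}(r)\times M$. Along $\Phi(\Sigma)$ decompose $\bar X_{v}|_{\Sigma}=T_{v}+V_{v}$ with $V_{v}\in N\Sigma$, so each $V_{v}$ is an admissible normal variation.

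Using the covariant-derivative formula $\bar\nabla_{Y}\bar X_{v}=-\tfrac{1}{r^{2}}\langle v,\phi\rangle(Y_{\s^{m}},0)$, the block structure of the ambient curvature, and a local orthonormal frame $e_{i}=(a_{i},b_{i})$ of $T\Sigma$, I would expand the index form $I(V_{v},V_{v})$ and sum over an orthonormal basis $\{v_{\alpha}\}_{\alpha=0}^{m}$ of $\r^{m+1}$. The basic identity $\sum_{\alpha}X_{v_{\alpha}}(x)\otimes X_{v_{\alpha}}(x)=\mathrm{Id}_{T_{x}\s^{m}(r)}$ collapses every $v$-dependent term into intrinsic scalar quantities controlled by $n^{\s}:=\sum_{i}|a_{i}|^{2}$ and the sphere-component $A^{\s}$ of the second fundamental form. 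The expected outcome is that $\sum_{\alpha}I(V_{v_{\alpha}},V_{v_{\alpha}})$ is the integral of a manifestly non-positive quantity, so stability forces every integrand term to vanish pointwise. This produces a trichotomy in the values of $n^{\s}$ on $\Sigma$: if $n^{\s}\equiv 0$ then $\phi$ is constant and $\Phi$ factors through a slice $\{p\}\times M$, yielding (2) or (3) after restricting the stability information to $\psi$; if $n^{\s}\equiv n$ then $\psi$ is constant and $\Phi$ factors through $\s^{m}(r)\times\{q\}$, yielding (1); otherwise the simultaneous vanishing of $A^{\s}$ forces $\phi$ to be a Riemannian submersion with totally geodesic spherical fibres of dimension $m$, so that $T\Sigma$ carries a parallel rank-$m$ distribution $\mathcal{D}$ whose leaves are totally geodesic copies of $\s^{m}(r)$. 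A de Rham decomposition then produces $\Sigma=\s^{m}(r)\times\hat\Sigma$ and $\Phi=\mathrm{Id}\times\psi$, giving (4); the stability of $\psi:\hat\Sigma\to M$ follows by restricting variations to the $\hat\Sigma$-factor.

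The hardest step will be the last one in the trichotomy: passing from the existence of a parallel spherical distribution to a genuine Riemannian product splitting with first factor isometric to the full $\s^{m}(r)$ rather than a quotient. This is precisely where $m\geq 3$ is used, ensuring that a totally geodesic submanifold of $\s^{m}(r)$ of the same dimension must be the whole sphere and ruling out covering-type ambiguities that would otherwise appear. The borderline case $m=2$ is why the statement assumes $\Phi$ to be a hypersurface: the one-dimensional normal bundle reduces $V_{v}$ to $\langle X_{v},\nu^{\s}\rangle\nu$ for a fixed unit normal $\nu$, turning the Simons-type sum into a scalar stability inequality on $\Sigma$ from which the same trichotomy can be extracted directly by integration by parts, thereby bypassing the higher-dimensional splitting argument.
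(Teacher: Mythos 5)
Your test sections are precisely the ones the paper uses: your $V_v$ is the normal component $\eta_a$ of a constant vector $a\in\r^{m+1}$, and summing the index form over an orthonormal basis of $\r^{m+1}$ is also the paper's device, so the overall route is the same. The genuine gap is your claim that the summed quantity is ``manifestly non-positive''. It is not. After summation every term involving the second fundamental form cancels (via the Codazzi equation and the curvature term), and what remains is, up to the factor $1/4r^2$,
\[
2\traza(A^2)-(\traza A)^2+2(m-n)\traza A+n(2m-n-2),
\]
where $A_{ij}=\langle Pe_i,e_j\rangle$ is the tangential block of the product structure $P$ (not the second fundamental form, so the quantity you call $A^{\s}$ never enters). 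This expression has no sign by itself. Its sign is extracted by the Schwarz inequality $(\traza A)^2\le n\,\traza(A^2)$ together with a case distinction $n\le m$ versus $n>m$ (in the second case one must first split off the $(n-m)$-dimensional kernel of $d\phi$ and argue on the reduced matrix $\hat{A}$), and it is exactly there that $m\ge 3$ is indispensable: for $n\le m$ the decisive factor is $\tfrac{n-2}{n}\traza A-2m+n+2$, which vanishes identically when $n=m=2$, and for $n>m$ the resulting inequality carries the coefficient $\tfrac{m-2}{m}$, which destroys all information when $m=2$. The stable diagonal $\s^2(r)\subset\s^2(r)\times\s^2(r)$ and the stable totally geodesic torus in $\s^1(r)\times T$ show that no pointwise-signed estimate of the kind you assert can exist for $m\le 2$ in arbitrary codimension. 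For the same reason your localization of the hypothesis $m\ge 3$ is misplaced: it is not needed to guarantee that the leaves of the parallel distribution are full spheres rather than quotients (that step, via simple connectedness of $\s^m$, works verbatim for $m=2$); it is needed to make the stability inequality bite at all.

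The $m=2$ hypersurface case also needs more than you provide. The paper does not merely scalarize the same inequality: it derives a second, dual stability inequality in which the normal block $B_{\alpha\beta}=\langle P\xi_\alpha,\xi_\beta\rangle$ replaces $A$ (using $p-\traza(B^2)=\sum_{i,\alpha}\langle Pe_i,\xi_\alpha\rangle^2$ in place of $n-\traza(A^2)$); for $p=1$, $m=2$ this reads $0\le-\int_\Sigma(\lambda+1)^2\,d\Sigma$ with $B=\lambda$, forcing $P\xi=-\xi$, and only the combination of this with the (now degenerate) equality case of the tangential inequality and the relation $\traza A+\traza B=\traza P$ pins down $A=Id$ (for $n=2$) or $A=-Id\oplus Id$ (for $n>2$). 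Your sketch assumes a globally defined unit normal $\nu$, which is not granted (the hypersurface could be one-sided; the paper's $\eta_a$ is globally defined regardless), and the claim that the splitting argument is ``bypassed'' is not correct: for $n>2$ the hypersurface case still lands in alternative (4), so the de Rham-type product decomposition is still required.
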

If $T=\s^1(1)\times\s^1(1)$ and $\Phi:T\rightarrow \s^1(r)\times T$ the totally geodesic embedding given by $\Phi(x,y)=(rxy,x,y)$, where $xy$ denotes the product of the unit complex numbers $x$ and $y$, then $\Phi$ is stable and $\Phi(T)$ is not the product of stable minimal submanifolds. Also, if $\Phi:\s^2(r)\rightarrow\s^2(r)\times\s^2(r)$ is the diagonal map $\Phi(x)=(x,x)$, then $\Phi$ is a stable totally geodesic embedding (see \cite{CU} for details) and $\Phi(\s^2(r))$ is not the product of stable minimal submanifolds. So, in this setting, the above result is the best one.

It is well-known that the complex submanifolds of any Kähler manifold are stable minimal submanifolds, and it is a hard and interesting problem to know which Kähler manifolds have their complex submanifolds as the only stable minimal submanifolds (see \cite{MW}, \cite{SY}). As we mentioned before, this is the case of the complex projective space (see~\cite{LS}). In our setting, the only sphere which admits a Kähler structure is $\s^2(r)$, and then if $M$ is any Riemann surface, we have on $\s^2(r)\times M$ two Kähler structures: $J_1=(J_0,J),\,J_2=(-J_0,J)$, where $J_0$ is the complex structure on $\s^2(r)$ and $J$ the Kähler structure on $M$. Clearly $F(x,y)=(-x,y)$ defines a holomorphic isometry between $(\s^2(r)\times M,J_1)$ and $(\s^2(r)\times M,J_2)$. We prove that the complex compact surfaces of these Kähler surfaces are the only stable compact minimal surfaces.

\begin{theorem}\label{tm:kaeler}
Let $M$ be any Riemann surface and $\Phi=(\phi,\psi):\Sigma\rightarrow \s^2(r)\times M$ a minimal immersion of a compact surface $\Sigma$. Then, $\Phi$ is stable if and only if $\Sigma$ is orientable and $\Phi$ is a complex immersion of the Riemann surface $\Sigma$ into $\s^2(r)\times M$ with respect to one of the two complex structures that $\s^2(r)\times M$ has.
\end{theorem}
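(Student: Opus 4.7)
The easy direction is standard: every compact complex submanifold of a K\"ahler manifold is calibrated by the K\"ahler form, hence area-minimizing in its homology class and, in particular, a stable minimal submanifold. Since both $(\s^{2}(r)\times M,J_{1})$ and $(\s^{2}(r)\times M,J_{2})$ are K\"ahler, every holomorphic immersion of a compact Riemann surface is automatically a stable minimal immersion.

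For the converse, assume that $\Phi$ is stable, and first treat the case in which $\Sigma$ is oriented. For each complex structure $J_{\alpha}$, $\alpha=1,2$, define the K\"ahler angle $\theta_{\alpha}:\Sigma\to[0,\pi]$ by $\cos\theta_{\alpha}\,dA=\Phi^{*}\omega_{\alpha}$; the immersion is complex with respect to $J_{\alpha}$ precisely when $\sin\theta_{\alpha}\equiv 0$. Since $\Phi$ is a harmonic map into a real-analytic target, it is itself real-analytic and so are the $\sin\theta_{\alpha}$. Therefore it suffices to prove the pointwise identity $\sin\theta_{1}\sin\theta_{2}\equiv 0$ on $\Sigma$: unique continuation on the connected manifold $\Sigma$ then yields $\sin\theta_{1}\equiv 0$ or $\sin\theta_{2}\equiv 0$, which is the desired complex condition.

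The identity $\sin\theta_{1}\sin\theta_{2}\equiv 0$ will emerge from a sum of second variations built from two families of ambient vector fields that are natural for the product $\s^{2}(r)\times M$. For $v\in\r^{3}$, let $v^{T}(x)=v-\langle v,x\rangle x/r^{2}$ be the tangential projection on $\s^{2}(r)$ (the conformal gradient of the height function $\langle v,\cdot\rangle$) and let $v\wedge x=J_{0}v^{T}$ be the associated rotational Killing field. Extend both trivially in the $M$-direction and take their normal components along $\Sigma$ to obtain normal variations $V_{v}=(v^{T},0)^{\perp}$ and $W_{v}=(v\wedge x,0)^{\perp}$. Stability yields $I(V_{v},V_{v})\ge 0$ and $I(W_{v},W_{v})\ge 0$, which I sum over an orthonormal basis $\{e_{1},e_{2},e_{3}\}$ of $\r^{3}$. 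Using the algebraic identities $\sum_{i}\langle v_{i},a\rangle\langle v_{i},b\rangle=\langle a,b\rangle$ and $\sum_{i}\langle v_{i}\wedge x,a\rangle\langle v_{i}\wedge x,b\rangle=r^{2}\langle a,b\rangle-\langle a,x\rangle\langle b,x\rangle$, together with the minimality of the components $\phi$ and $\psi$, the sum collapses into an integral over $\Sigma$ in which the Simons term is re-expressed through the K\"ahler angles. Combining with the algebraic relations $\cos\theta_{1}\pm\cos\theta_{2}=2\phi^{*}\omega_{0}/dA$ or $2\psi^{*}\omega_{M}/dA$ and the bounds $|\cos\theta_{\alpha}|\le 1$ should yield the pointwise inequality $\sin\theta_{1}\sin\theta_{2}\le 0$ on $\Sigma$; since both factors are non-negative, this forces the sought identity.

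The non-orientable case is reduced to the orientable one by passing to the oriented double cover $\pi:\tilde\Sigma\to\Sigma$ and the lifted immersion $\tilde\Phi=\Phi\circ\pi$. The test variations $V_{v}$ and $W_{v}$ are pullbacks of ambient vector fields, hence lift to $\z_{2}$-equivariant normal variations on $\tilde\Sigma$, and the argument above still applies, forcing $\tilde\Phi$ to be complex with respect to some $J_{\alpha}$. But then the orientation-reversing deck involution $\sigma$ satisfies $\sigma^{*}J_{\tilde\Sigma}=-J_{\tilde\Sigma}$ and $\tilde\Phi\circ\sigma=\tilde\Phi$, which together with the holomorphicity of $\tilde\Phi$ (and the injectivity of $d\tilde\Phi$) gives $J_{\tilde\Sigma}=-J_{\tilde\Sigma}$, a contradiction. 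Hence $\Sigma$ must have been orientable to begin with.

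The main obstacle is the explicit calculation of the sum $\sum_{i}\bigl[I(V_{e_{i}},V_{e_{i}})+I(W_{e_{i}},W_{e_{i}})\bigr]$. While the curvature of $\s^{2}(r)$ is explicit (constant sectional curvature $1/r^{2}$) and the conformal/Killing nature of $v^{T}$ and $v\wedge x$ controls the covariant derivatives, the curvature of the arbitrary Riemann surface $M$ enters the computation through the normal projection and through the second fundamental form. The product K\"ahler structure must be used to decouple the calculation into pieces that can be rewritten purely in terms of $\cos\theta_{1}$ and $\cos\theta_{2}$. Verifying that all $M$-curvature contributions cancel against a piece of the Simons term, leaving a clean algebraic inequality in the two K\"ahler angles, is the delicate step.
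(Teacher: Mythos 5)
Your outline follows the same route as the paper more closely than you may suspect: your sections $V_{v}=(v^{T},0)^{\perp}$ are exactly the paper's test sections $\eta_{a}$ (the normal part of the projection of a constant $a\in\r^{3}$), and your analyticity/unique--continuation endgame is a legitimate substitute for the paper's appeal to Webster's theorem on isolated complex points (use $\sin^{2}\theta_{\alpha}$, which is real-analytic, rather than $\sin\theta_{\alpha}$, which need not be analytic at its zeros). The genuine gap is that the heart of the argument --- the evaluation of $\sum_{k}Q(\eta_{a_{k}})$ over an orthonormal basis $\{a_{k}\}$ of $\r^{3}$ --- is not carried out; you yourself label it ``the delicate step'' and say the computation ``should yield'' the inequality. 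Moreover the two obstacles you flag are exactly what that computation settles. First, no cancellation of $M$-curvature against the Simons term is needed: after using the Codazzi equation, minimality, and the identity $\sum_{k}\langle\phi,a_{k}\rangle\eta_{a_{k}}=0$, every second-fundamental-form term drops out of the summed expression, and the only curvature contribution is $\sum_{i}(\bar{R}(X_{a}+\eta_{a},e_{i})e_{i})^{\perp}$ with $X_{a}+\eta_{a}=(a^{T},0)$ tangent to the $\s^{2}(r)$ factor, so for the product metric the curvature of $M$ never enters at all. Without doing this you have no control on the sign of the integrand, hence no proof.

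Second, your logical step from stability to a pointwise statement is misstated: stability only gives $0\le\sum_{k}Q(\eta_{a_{k}})=-\int_{\Sigma}\sum_{k}\langle L\eta_{a_{k}},\eta_{a_{k}}\rangle\,d\Sigma$, and one can extract pointwise information only because the computed integrand has a sign. Writing $A_{ij}=\langle Pe_{i},e_{j}\rangle$, the computation gives (for $n=m=2$) the integrand $(\traza A)^{2}-2\traza(A^{2})=-\bigl[(A_{11}-A_{22})^{2}+4A_{12}^{2}\bigr]=-\bigl(2-\abs{\Phi^{*}\omega_{1}}^{2}\bigr)\bigl(2-\abs{\Phi^{*}\omega_{2}}^{2}\bigr)$, i.e.\ a negative multiple of $\sin^{2}\theta_{1}\sin^{2}\theta_{2}$; being nonpositive, it must vanish identically, and only then does your endgame apply. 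The extra Killing-field sections $W_{v}$ are unnecessary. Your double-cover treatment of orientability is sound in outline (the correct contradiction is that the deck involution becomes holomorphic for the induced complex structure, hence orientation-preserving), but as written the proposal establishes neither the key identity for the integrand nor the pointwise vanishing, so the converse direction is not proved.
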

As a corollary of these two results, we obtain the complete classification of the stable compact minimal submanifolds of the product of two spheres:
\begin{corollary}\label{cor:esferas}
Let $\Phi=(\phi,\psi): \Sigma \rightarrow \s^{n_1}(r_1) \times \s^{n_2}(r_2)$ be a minimal immersion of a compact $n$-manifold  $\Sigma$, $n\geq 2$. Then, $\Phi$ is stable if and only if one of the following possibilities occurs:
\begin{enumerate}
	\item $\Sigma=\s^{n_1}(r_1)$ and $\Phi(\Sigma)$ is a slice $\s^{n_1}(r_1)\times\{q\}$ with $q$ a point of $\s^{n_2}(r_2)$.
	\item $\Sigma=\s^{n_2}(r_2)$ and $\Phi(\Sigma)$ is a slice $\{p\}\times\s^{n_2}(r_2)$ with $p$ a point of $\s^{n_1}(r_1)$.
	\item $n_1 = n_2 = n= 2$, $\Sigma$ is orientable and $\Phi$ is a complex immersion of the Riemann surface $\Sigma$ in $\s^{2}(r_1)\times\s^{2}(r_2)$ with respect to one of the two complex structures that $\s^2(r_1) \times \s^2(r_2)$ has.
\end{enumerate}
\end{corollary}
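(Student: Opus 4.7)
My plan is to derive the corollary by combining Theorems~\ref{tm:clasificacion-estables} and~\ref{tm:kaeler} with Simons' classical theorem~\cite{Si} that every compact positive-dimensional minimal submanifold of a round sphere $\s^k(r)$, $k\ge 2$, has positive index and is therefore unstable. Since the statement is essentially a bookkeeping consequence of the two main theorems together with Simons' result, I do not anticipate any serious obstacle; the work lies in a careful case split according to the pair $(n_1,n_2)$ and the dimension $n$ of $\Sigma$.

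First I will handle the case in which one of the factors, say $\s^{n_1}(r_1)$, has $n_1\ge 3$ (the symmetric case $n_2\ge 3$ follows by swapping the roles of the two spheres). I apply Theorem~\ref{tm:clasificacion-estables} with $m=n_1$ and $M=\s^{n_2}(r_2)$ and reduce each of its four conclusions: conclusion~(1) coincides with option~(1) of the corollary; conclusion~(2) forces $\Sigma$ to be a covering of $\s^{n_2}(r_2)$, and since $n\ge 2$ gives $n_2=n\ge 2$, the simple connectedness of $\s^{n_2}(r_2)$ trivialises the covering and yields option~(2); conclusions~(3) and~(4) would each provide a compact positive-dimensional stable minimal immersion into $\s^{n_2}(r_2)$, contradicting Simons' theorem, the only exception being the subcase of~(4) in which $\hat\Sigma$ is a point, which collapses to~(1).

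It remains to treat $n_1=n_2=2$. If $n=3$, then $\Phi$ is a hypersurface and Theorem~\ref{tm:clasificacion-estables} applies with $m=2$; none of its four conclusions is compatible with $n=3$ (the two slice cases have dimension~$2$, and the remaining two contradict Simons), so $\s^2(r_1)\times\s^2(r_2)$ admits no stable compact minimal hypersurface. If $n=2$, I apply Theorem~\ref{tm:kaeler} with $M=\s^2(r_2)$ to obtain exactly option~(3). Finally, the converse direction is immediate: each slice is a product of stable minimal factors and is therefore stable by the product argument recalled in Section~2, while every complex submanifold of a K\"ahler manifold is automatically stable.
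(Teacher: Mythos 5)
Your derivation is correct and is essentially the paper's own (implicit) route: the paper offers no separate argument for the corollary, treating it as an immediate consequence of Theorems~\ref{tm:clasificacion-estables} and~\ref{tm:kaeler} together with Simons' nonexistence of stable compact minimal submanifolds in spheres, which is exactly how you proceed. The only points you leave tacit --- the ambients with a circle factor ($n_1$ or $n_2$ equal to $1$, handled by the same $m=2$ hypersurface case or vacuous) and the degenerate codimension-zero coverings inside conclusions (2)--(4) of Theorem~\ref{tm:clasificacion-estables} --- are likewise left implicit by the paper and do not affect the argument.
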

In the proof of these results we use the same idea in order to get suitable test normal sections. This idea consists into taking, as test sections, normal components of parallel vector fields of the Euclidean space where the sphere $\s^n(r)$ sits.
\section{Preliminaries}

Let $M_1\times M_2$ be the Riemannian product of two Riemannian manifolds $M_1$ and $M_2$ of dimensions $n_1$ and $n_2$ and $P$ the product structure on $M_1\times M_2$ defined as the tensor
$$
P(v)=P(v_1,v_2)=(v_1,-v_2),\quad v=(v_1,v_2)\in T_{(p,q)} M_1\times M_2.
$$
We note that,
 $$TM_1=\{v\in T(M_1\times M_2)\,|\,Pv=v\},\quad TM_2=\{v\in T(M_1\times M_2)\,|\,Pv=-v\}.$$
It is clear that:
\begin{enumerate}
\item $P$ is a linear isometry,
\item $P$ is parallel, i.e. $\nabla P=0$,
\item $P^2=Id$ and $\traza\, P=n_1-n_2$, where $\traza$ stands for the trace.
\end{enumerate}
The curvature $R$ of $M_1\times M_2$ is given in terms of the curvatures $R_i$ of $M_i,i=1,2$, as follows:
\[
R(v,w)x=(R_1(v_1,w_1)x_1,R_2(v_2,w_2)x_2),
\]
where $v=(v_1,v_2), w=(w_1,w_2)$ and $z=(z_1,z_2)$.

Let $\Phi:\Sigma\rightarrow M_1\times M_2$ be a minimal immersion of a compact $n$-dimensional manifold $\Sigma$. The Jacobi operator $L$ of the second variation is a strongly elliptic operator acting on the space $\Gamma(T^{\bot}\Sigma)$ of sections of the normal bundle of $\Phi$, given by
\[
L=\Delta^{\bot}+\mathfrak{B}+\mathfrak{R},
\]
where $\Delta^{\bot}$ is the second order operator
\[
\Delta^{\bot}=\sum_{i=1}^n\{\nabla^{\bot}_{e_i}\nabla^{\bot}_{e_i}-
\nabla^{\bot}_{\nabla_{e_i}e_i}\},
\]
and $\mathfrak{B}$ and $\mathfrak{R}$ are the endomorphisms defined as follows:
\[
\mathfrak{B}(\eta)=\sigma(e_i,A_{\eta}e_i),\quad \mathfrak{R}(\eta)=(R(\eta,e_i)e_i)^{\bot},
\]
where $\eta\in\Gamma(T^{\bot}\Sigma)$, $\{e_1,\dots,e_n\}$ is an orthonormal reference tangent to $\Sigma$, $\nabla^{\bot}$ is the normal connection, $\sigma$ is the second fundamental form of $\Phi$, $A_{\eta}$ is the shape operator and $\bot$ denotes normal component. The quadratic form associated to $L$ is defined by
\[
Q(\eta)=-\int_{\Sigma}\langle L\eta,\eta\rangle\,d\Sigma.
\]
The {\it index} of $\Phi$ is the index of the quadratic form $Q$ and the immersion $\Phi$ is called {\it stable} if its index is zero, i.e.,
\[
\Phi\quad\hbox{is stable if and only if}\quad Q(\eta)\geq 0,\quad \forall \eta\in\Gamma(T^{\bot}\Sigma).
\]
If $\Phi_i:\Sigma_i\rightarrow M_i$, are minimal immersions of the compact manifolds $\Sigma_i,\,i=1,2$, then $\Phi=\Phi_1\times \Phi_2:\Sigma_1\times \Sigma_2\rightarrow M_1\times M_2$ is also a minimal immersion. Moreover, $\Gamma(T^{\bot}\Sigma_i)$ is a subspace of $\Gamma(T^{\bot}(\Sigma_1\times\Sigma_2))$ and if $\eta_i\in\Gamma(T^{\bot}\Sigma_i)$, then it is easy to check that
\[
Q(\eta_1)=Q_1(\eta_1)\,\hbox{volume}\,(\Sigma_2),\quad Q(\eta_2)=Q_2(\eta_2)\,\hbox{volume}\,(\Sigma_1),
\]
where $Q_i$ are the quadratic forms associated to $\Phi_i,\,i=1,2$.

On the other hand, given a normal section  $\eta\in\Gamma(T^{\bot}(\Sigma_1\times\Sigma_2))$, for any $(p,q)\in\Sigma_1\times\Sigma_2$, we have that
\[
\eta(p,q)=(\eta_1(p,q),\eta_2(p,q)):=((\eta_1)_q(p),(\eta_2)_p(q)).
\]
It is clear that $(\eta_1)_q\in\Gamma(T^{\bot}\Sigma_1)$ for any $q\in\Sigma_2$, and that  $(\eta_2)_p\in\Gamma(T^{\bot}\Sigma_2)$ for any $p\in\Sigma_1$. Moreover, it is straightforward to check that
\[
Q(\eta)=\int_{\Sigma_1}Q_2((\eta_2)_p)\,d\Sigma_1+\int_{\Sigma_2}Q_1((\eta_1)_q)\,d\Sigma_2.
\]
Hence we have proved that:

{\it $\Phi_1\times\Phi_2:\Sigma_1\times\Sigma_2\rightarrow M_1\times M_2$ is stable
if and only if $\Phi_i:\Sigma_i\rightarrow M_i,\,i=1,2,$ are stable.}
\section{Proofs of the results.}
\begin{proof}[Proof of Theorem~\ref{tm:clasificacion-estables}]
As the examples appearing in (1), (2), (3) and (4) are product of stable minimal submanifolds (some of them of dimension or codimension zero), they are stable minimal submanifolds.

 Conversely,  let $\Phi:\Sigma\rightarrow \s^{m}(r)\times M$ be a stable minimal immersion of codimension $p$. We consider $\s^{m}(r) \subseteq \r^{m+1}$. Given a vector $a \in \r^{m+1}$, its normal component to $\Phi$ (respectively its tangential component to $\Sigma$) will be denote by $\eta_a$ (respectively $X_a$). We shall use $\eta_a$ as test section, hence we next compute $L(\eta_a)$.

Let $\{e_i:\, i = 1, \ldots, n\}$  be a local orthonormal reference in $T\Sigma$ and $\{\xi_\alpha:\, \alpha = 1, \ldots, p\}$ a local orthonormal reference in $T^\bot \Sigma$. We will denote by $A$ the square $n$-matrix $A_{ij}=\langle Pe_i,e_j\rangle$ and by $B$ the square $p$-matrix $B_{\alpha\beta}=\langle P\xi_{\alpha},\xi_\beta\rangle$.

Deriving the vector $a\in\r^{m+1}$ with respect to $e_i$  and taking normal and tangential components to $\Sigma$ we obtain that
\begin{align}
\nabla^\bot_{e_i} \eta_a &= -\sigma(X_a, e_i) -\frac{\langle \phi,a\rangle}{2r^2}\sum_{\alpha=1}^p\langle Pe_i,\xi_{\alpha}\rangle \xi_{\alpha}, \label{eq:parte-normal-derivada-X_a}\\
\nabla_{e_i} X_a &= A_{\eta_a} e_i -\frac{\langle \phi,a\rangle}{2r^2}(e_i+\sum_{j=1}^nA_{ij} e_j).  \label{eq:parte-tangente-derivada-X_a}
\end{align}
Deriving again~\eqref{eq:parte-normal-derivada-X_a} with respect to $e_i$ and using that $\phi_*(v)=(v+Pv)/2$ for any $v\in T\Sigma$, we obtain
\[
\begin{split}
\Delta^{\bot}\eta_a =&-\sum_{i=1}^n\{(\nabla\sigma)(e_i,X_a,e_i)+\sigma(e_i,\nabla_{e_i}X_a)\}\\
&-\frac{1}{4r^2}\sum_{i,\alpha}\langle e_i+Pe_i,a\rangle\langle Pe_i,\xi_{\alpha}\rangle\xi_{\alpha}-\frac{\langle\phi,a\rangle}{2r^2}\sum_{i,\alpha}\langle Pe_i,\xi_{\alpha}\rangle\nabla^{\bot}_{e_i}\xi_{\alpha}\\
&-\frac{\langle\phi,a\rangle}{2r^2}\sum_{i,\alpha}\langle Pe_i,-A_{\xi_{\alpha}}e_i+\nabla^{\bot}_{e_i}\xi_{\alpha}\rangle\xi_{\alpha}.
\end{split}
\]
Now using the Codazzi equation, ~\eqref{eq:parte-tangente-derivada-X_a} and the definition of the Jacobi operator, we get
\[
\begin{split}
L\eta_a=&\sum_{i=1}^n(\bar{R}(X_a+\eta_a,e_i)e_i)^{\bot}+\frac{\langle\phi,a\rangle}{2r^2}\sum_{i,j=1}^nA_{ij}\sigma(e_i,e_j)\\
&-\frac{1}{4r_1^2}\sum_{i,\alpha}\langle e_i+Pe_i,a\rangle\langle Pe_i,\xi_{\alpha}\rangle\xi_{\alpha}-\frac{\langle\phi,a\rangle}{2r^2}\sum_{i,\alpha}\langle Pe_i,\xi_{\alpha}\rangle\nabla^{\bot}_{e_i}\xi_{\alpha}\\
&-\frac{\langle\phi,a\rangle}{2r_1^2}\sum_{i,\alpha}\langle Pe_i,-A_{\xi_{\alpha}}e_i+\nabla^{\bot}_{e_i}\xi_{\alpha}\rangle\xi_{\alpha}.
\end{split}
\]
From the expression of the curvature $R$ and as $P(X_a+\eta_a)=X_a+\eta_a$, it is easy to check that
\[
\sum_{i=1}^n(\bar{R}(X_a+\eta_a,e_i)e_i)^\bot=\frac{n+\traza A}{2r^2}\eta_a-\sum_{i,\alpha}\frac{\langle P\xi_{\alpha},e_i\rangle\langle e_i,a\rangle}{2r^2}\xi_{\alpha}.
\]

If $\{a_1,\dots,a_{m+1}\}$ is an orthonormal basis of $\r^{m+1}$, then using the above two formulas we obtain that
\[
\begin{split}
\sum_{k=1}^{m+1}\langle L\eta_{a_k},\eta_{a_k}\rangle
&=\frac{n+\traza A}{2r^2}\sum_{\alpha,k}\langle a_k,\xi_{\alpha}\rangle^2-\sum_{i,\alpha,k}\frac{\langle P\xi_{\alpha},e_i\rangle\langle e_i,a_k\rangle\langle\xi_{\alpha},a_k\rangle}{2r^2}\\
&\quad-\frac{1}{4r^2}\sum_{i,\alpha,k}\langle Pe_i,\xi_{\alpha}\rangle\langle e_i+Pe_i,a_k\rangle\langle\xi_{\alpha},a_k \rangle\\
&=\frac{n+\traza A}{8r_1^2}\sum_{\alpha=1}^p|\xi_{\alpha}+P\xi_{\alpha}|^2
-\sum_{i,\alpha}\frac{\langle P\xi_{\alpha},e_i\rangle\langle e_i+Pe_i,\xi_{\alpha}+P\xi_{\alpha}\rangle}{4r^2}\\
&=\frac{(n+\traza A)(p+\traza B)}{4r_1^2}-\frac {1}{2r^2}\sum_{i,\alpha}\langle Pe_i,\xi_{\alpha}\rangle^2.
\end{split}
\]
Since $\traza A+\traza B=\traza P=2m-n-p$ and
\[
n=\sum_{i=1}^n|Pe_i|^2=\sum_{i,j=1}^n\langle Pe_i,e_j\rangle^2+\sum_{i,\alpha}\langle Pe_i,\xi_{\alpha}\rangle^2=\traza(A^2)+\sum_{i,\alpha}\langle Pe_i,\xi_{\alpha}\rangle^2,
\]
we obtain that
\[
\sum_{k=1}^{m+1}\langle L\eta_{a_k},\eta_{a_k}\rangle
=\frac{1}{4r^2}\left[ 2\traza(A^2) - (\traza A)^2 + 2(m - n)\traza{A} + n(2m - n - 2)\right].
\]
The stability of $\Phi$ implies that $0\leq\sum_{k}Q(\eta_{a_k})=-\sum_{k}\int_{\Sigma}\langle L\eta_{a_k},\eta_{a_k}\rangle$, so we finally get that
\begin{equation}
0\leq \int_{\Sigma}\left[- 2\traza(A^2) + (\traza A)^2 - 2(m - n)\traza{A} - n(2m - n - 2)\right]d\Sigma.  \label{eq:desigualdad estabilidad}
\end{equation}

To use this stability inequality, we will consider three different cases:

\medskip

\noindent\textbf{First case}: $n\leq m$ and $m\geq 3$.

The Schwarz inequality implies that $(\traza A)^2\leq n\traza(A^2)$ and the equality holds if and only if $A=\lambda Id$ for certain function $\lambda$ on $\Sigma$. So ~\eqref{eq:desigualdad estabilidad} becomes
\begin{equation}
0\leq \int_{\Sigma}(\traza A+n)\left(\frac{n-2}{n}\traza A-2m+n+2\right)\,d\Sigma. \label{eq:desigualdad estabilidad-f}
\end{equation}
As $-n\leq \traza A\leq n$ and $m\geq 3$, the integrand is non-negative, hence the equality holds in the above inequality. This means that $A=\lambda Id$ and either $\traza A=-n$ or $\traza A=n=m$. So we have that either $A=- Id$ or $A= Id$ and $n=m$.

\medskip

\noindent\textbf{Second case}: $n>m$ and $m\geq 3.$

In this setting, for any point $x\in\Sigma$, we have that $\dim\ker d\phi_x\geq n-m$ and so, there exists an $(n-m)$-dimensional linear subspace $V_x\subset\ T_x\Sigma$ such that $Pv=-v$ for any $v\in V_x$. Hence, we can decompose $T_x\Sigma=V_x\oplus Z_x$, with $Z_x$ orthogonal to $V_x$ and $\dim Z_x=n-(n-m)=m$. Thus, our matrix $A$ can be written as $A=-Id\oplus \hat{A}$, with $\hat{A}_{ij}=\langle Pz_i,z_j\rangle$, and $\{z_1,\dots,z_p\}$ being
an orthonormal basis of $Z_x$. In particular,
\[
\traza A=m-n+\traza \hat{A},\quad\traza (A^2)=n-m+\traza(\hat{A}^2).
\]
 Then ~\eqref{eq:desigualdad estabilidad} becomes
\[
0\leq\int_{\Sigma}\bigl( -2\traza (\hat{A}^2)+(\traza \hat{A})^2-m(m-2) \bigr)d\Sigma.
\]
The Schwarz inequality implies that $(\traza \hat{A})^2\leq m\traza(\hat{A}^2)$ and the equality occurs if and only if $\hat{A}=\lambda Id$ for certain function $\lambda$ on $\Sigma$. So the above integral inequality transforms into
\begin{equation}
0\leq \frac{m-2}{m}\int_{\Sigma}\bigl((\traza \hat{A})^2-m^2\bigr)d\Sigma. \label{eq:inequality-hat}
\end{equation}
As $m\geq 3$ and $-m\leq \traza \hat{A}\leq m$, the integrand of~\eqref{eq:inequality-hat} is non-positive and we obtain the equality in~\eqref{eq:inequality-hat}, which means that $\traza\hat{A}=\pm m$ and $\hat{A}=\lambda Id$. So, $\hat{A}=\pm Id$. Hence we obtain that either $A=-Id$ or $A=-Id\oplus Id$.

\medskip
\noindent\textbf{Third case}: $p=1$ and $m=2$.

Following the argument used to get ~\eqref{eq:desigualdad estabilidad} and changing the matrix $A$ by $B$, it is straightforward to get a second version of the stability inequality
\[
0\leq \int_{\Sigma}\left[- 2\traza(B^2) + (\traza B)^2 - 2(m - p)\traza{B} - p(2m - p - 2)\right]d\Sigma.
\]
If the codimension $p=1$, then the matrix $B=\lambda$ for certain function $\lambda$. Then, taking $m=2$, the above inequality become in
\[
0\leq -\int_{\Sigma}(\lambda+1)^2\,d\Sigma.
\]
Since the integrand is non-negative, the equality holds in the above inequality, which means that $\lambda=-1$.

On the other hand, if $n=2$, then the equality holds in~\eqref{eq:desigualdad estabilidad-f}, and so, $A=\lambda Id$. Hence $1=\traza P=\traza A+\traza B=2\lambda-1$ and we obtain that $A=Id$.

If $n>2=m$, then the equality holds in~\eqref{eq:inequality-hat}, and so $\hat{A}=\lambda Id$. Hence $3-n=\traza P=\traza A+\traza B=2-n+2\lambda-1$ and we obtain that $\hat{A}=Id$. So $A=-Id\oplus Id$.

In summary, if $\Phi$ is stable, the matrix $A$ has only three possibilities:
\[
A=-Id,\quad A=Id\quad n=m,\quad A=-Id\oplus Id.
\]

When $A=- Id$, we have that $Pe_i=-e_i+ (Pe_i)^{\bot}$, for $1\leq i\leq n$. As $P$ is an isometry, $(Pe_i)^{\bot}=0$ , and hence $Pv=-v$ for any $v\in T\Sigma$. This means that $\phi_*(v)=0$ for any $v\in T\Sigma$ and so, $\phi$ is a constant map. Hence $\psi:\Sigma\rightarrow M$ is either an stable minimal immersion if $n<\hbox{dim}\,M$ or $\Sigma$ is a covering of $M$ if $n=\hbox{dim}\, M$. We obtain the cases (3) and (2).

 When $n=m$ and $A=Id$, using a similar argument as before, $\psi:\Sigma\rightarrow M$ is constant and $\phi:\Sigma\rightarrow \s^{n}(r)$ is the identity. We obtain the case (1).

The last possibility means that we have two orthogonal distributions $D_1$ and $D_2$ on $\Sigma$ of dimensions $m$ and $n-m$ respectively, defined by
\[
D_1=\{v\in T\Sigma\,|\,Pv=v\}\quad D_2=\{v\in T\Sigma\,|\,Pv=-v\},
\]
such that $T\Sigma=D_1\oplus D_2$. In fact, $\phi:\Sigma\rightarrow \s^{m}(r)$ is a Riemannian submersion for which $D_1$ defines the vertical subspaces and $D_2$ defines the horizontal ones. As $P$ is parallel, these distributions are totally geodesic foliations on $\Sigma$, and so $\Sigma=\s^{m}(r)\times \Sigma_2$ and $\Phi=Id\times\Phi_2$ where $\Phi_2:\Sigma_2\rightarrow M$ is a stable submanifold. We obtain the case (4).
\end{proof}

\begin{proof}[Proof of Theorem~\ref{tm:kaeler}]
As $n=m=2$, we get the equality in ~\eqref{eq:desigualdad estabilidad-f} and then the matrix $A=\lambda Id$, for certain function $\lambda$ on $\Sigma$. In this setting, if $\omega_i$ is the Kähler $2$-forms associated to the complex structures $J_i$, of $\s^2(r)\times M$, $i=1,2$, it is straightforward to check that
\[
(A_{11}-A_{22})^2 + 4A_{12}^2 = (2 - \abs{\Phi^*\omega_1}^2)(2 - \abs{\Phi^*\omega_2}^2).
\]
As $A=\lambda Id$, then $(2 - \abs{\Phi^*\omega_1}^2)(2 - \abs{\Phi^*\omega_2}^2)=0$. But the minimal surfaces of Kähler surfaces are either complex or the set of complex points are isolated (see~[W]), from where we obtain that either
$\abs{\Phi^*\omega_1}^2=2$ or $\abs{\Phi^*\omega_2}^2=2$. This means that $\Phi$ is a complex immersion with respect to either $J_1$ or $J_2$.
\end{proof}

\end{document}